\newcommand{\N}{{\mathbb N}}
\newcommand{\R}{{\mathbb R}}
\newcommand{\Rn}{{\mathbb R}^n}
\newcommand{\Kn}{{\cal K}_n}
\newcommand{\CH}{{\mathrm{CH}}}
\newcommand{\Kone}{{\cal K}_1}
\newcommand{\Ktwo}{{\cal K}_2}
\newcommand{\dist}{{\rm dist}}
\newcommand{\haus}{{\mbox haus}}
\def\dist{\operatorname{dist}}
\newcommand{\Pair}[2]{\Pi({#1},{#2})}
\newcommand{\PrjXonY}[2]{\Pi_{#2}{(#1)} }
\newcommand{\Map}[1] {{F:#1 \rightarrow \Kn}}
\newcommand{\MapR}[1] {{F:#1 \rightarrow \Kone}}
\newtheorem{remark}{Remark}[section]
\newtheorem{definition}[remark]{Definition}
\newtheorem{example}[remark]{Example}
\newtheorem{theo}[remark]{Theorem}
\newtheorem{cor}[remark]{Corollary}
\begin{document}

\title{ {Metrically differentiable set-valued functions\\ and their local linear approximants}
}
\renewcommand{\thefootnote}{\fnsymbol{footnote}}
\author{
	Nira Dyn\footnotemark[1]\ , \;
	Elza Farkhi\footnotemark[1]\ , \;
	Alona Mokhov\footnotemark[2]\ .
}
\footnotetext[1]{Tel-Aviv University, Israel}
\footnotetext[2]{Afeka Tel-Aviv Academic College of Engineering, Israel}

\date{}
\maketitle

\centerline{ \bf Dedicated to Professor Dany Leviatan on the occasion of his 80th birthday}
\bigskip

\begin{abstract}
A new notion of metric differentiability of set-valued functions at a point is introduced in terms of right and left limits of special set-valued metric divided differences of first order.
A local metric linear approximant of a metrically differentiable set-valued function at a point is defined and studied. This local approximant may be regarded as a special realization
of the set-valued Euler approximants of M.~S.~Nikolskii and the directives of Z.~Artstein. 
Error estimates for the local metric linear approximant are obtained. 
In particular, second order approximation is derived for a class of ``strongly'' metrically differentiable set-valued maps.
\end{abstract}
\bigskip

{\bf Key words:} compact sets in Euclidean spaces, set-valued functions, metric divided differences of first order, metric differentiability, local metric linear approximants

\section{Introduction}\label{sect_Intro}

A differential calculus of set-valued functions (SVFs, set-valued maps, multifunctions) is playing a significant role in different areas of applied mathematics. 
Various notions of set-valued derivatives, motivated by different needs and applications, have been introduced and studied in the last five decades. 
The earlier papers focus on differentiabilty of multifunctions with convex images (see e.g.~\cite{Baier-Farkhi:2000, Bank-Jacobs, Blasi}).
Different concepts of set-valued differentiability (such as Hukuhara-derivatives, contingent derivatives, graphical derivatives, co-derivatives, generalized differentiability) 
are introduced, discussed, and studied, for instance, in~\cite{A-F:90, Hukuhara:67, Mordukhovich:06, Pang:11, Rockafellar:98}.
In~\cite{Gautier:90, Lemar_Zowe:91, Silin:97} the notions of affine, semi-affine, eclipsing, quasi-affine mappings are suggested for the role of set-valued "linear" approximants.
Derivatives of set-valued functions also arise in probability and statistics, where another approach still might be required (see e.g.~\cite{Khmaladze_Weil:14}). 

In a series of works~(e.g.~\cite{BDFM:19,DFM:Chains,DFM:Book_SV-Approx,DFM:MetricIntegral,DFM:HighOrder}) we have developed a metric approach to the approximation of SVFs, mapping a compact interval to general 
(not necessarily convex) compact sets in~$\Rn$. 
This approach, inspired by the ideas from~\cite{Artstein:MA}, includes metric tools such as metric chains, metric linear combinations, metric integral.
The metric tools are applied to adapt classical sample-based approximation operators to multifunctions. 
In~\cite{BDFM:19,DFM:Chains,DFM:Book_SV-Approx} we obtain approximation order at most~$1$ for SVFs which are continuous or of bounded variation.  
In~\cite{DFM:HighOrder} we define the notion of metric divided differences of any order $r\in\N$ and obtain high order approximation by metric piecewise-polynomial interpolation, 
without introducing a notion of derivative for set-valued functions.

In this paper, we introduce the concept of metric differentiability for a univariate multifunction $F$ with the aim of achieving a local approximation of $F$ with an error of order higher than 1. 
We apply this concept in constructing and analyzing a local metric linear approximant. Our approach yields error estimates beyond first order (including second order) for these approximants, 
extending classical error estimates for real-valued functions to set-valued functions.

The notion of local metric linear approximant,  introduced here may be regarded as a special unique  ``realization'' 
of the Euler approximants of M.S. Nikolskii \cite{NIK:93}, as well as of the ``directives'' of Artstein~\cite{ART:95}. 
They both have proposed a definition of a set-valued derivative by a local ``linear approximant'' of the given multifunction
in a neghborhood of a given point with error of order  ${o}(h)$ measured in the Hausdorff metric. 
They both have noticed that such a definition does not provide uniqueness of the derivative. 

We define here the notion of metric divided differences of first order anchored at a point $y\in F(x_0)$ 
and introduce one-sided (left and right) metric derivatives anchored at $y$ as the one-sided limits (left and right) of the corresponding metric divided differences.
We construct the local metric linear approximant of a metrically differentiable multifunction at a point $x_0$ and obtain rate of approximation $o(h)$. To get a higher order, $O(h^{m})$, $m>1$,
we introduce a higher order smoothness property for SVFs related to the rate of convergence of the metric divided differences to the one-sided metric derivatives.
\medskip

The paper is organized as follows:
In the next section we present preliminary material.
In Section~3 we present metric divided differences of first order, define the one-sided metric derivatives and investigate their properties. 
In Section~4 we introduce and study the local metric linear approximant.

\section{Preliminaries}
We use the folowing notation. The Euclidean norm in $\Rn$ is denoted by  $|\cdot|$.
In this paper we consider sets in $\Kn$, where $\Kn$ denotes the collection of nonempty compact subsets of $\Rn$.
The distance from a point ${x \in \Rn}$ to a set $A\subset\Rn$ is denoted by ${\mbox{dist}(x,A)=\min_{a \in A}|x-a|}$.

The set of all projections of $a \in \Rn$ on a set $B \in \Kn$ is
$$
\PrjXonY{a}{B}=\{b \in B:|a-b|=\dist(a,B)\}. 
$$
We denote the collection of all metric pairs of two sets $A,B \in \Kn$ by
$$
\Pair{A}{B} = \{(a,b) \in A \times B:\; a \in \PrjXonY{b}{A}\;\, \mbox{or}\;\, b\in\PrjXonY{a}{B} \}.
$$
\begin{remark}	\label{rem-1}
	It follows from the compactness of $A,B\in \Kn$ that $\Pair{A}{B}$ is a non-empty compact set. 
	(see Lemma 2.4 in~\cite{DFM:HighOrder})
\end{remark}
Using the metric pairs, the Hausdorff distance  can written as
\begin{equation} \label{haus_MetrPair}
	\haus(A,B)= \max \{|a-b|:\; (a,b)\in \Pair{A}{B}\}.
\end{equation}

The space $\Kn$ is a complete metric space with respect to this metric \cite{S:93}.
In this paper we regard convergence of sets and continuity of set-valued maps 
with respect to the Hausdorff metric.

We use in the paper the following property of the Hausdorff metric, which is easy to check:
\begin{equation}\label{haus_pr_uni}
	\haus (\bigcup\limits_{\lambda\in\Lambda} A_\lambda, \bigcup\limits_{\lambda\in\Lambda} B_\lambda) \le
	\sup_{\lambda\in\Lambda} \haus (A_\lambda, B_\lambda),\quad \Lambda\; \hbox{is an index set}.
\end{equation}
For a given set $A$ we use the notation
$$
\lambda A  = \left\{ \lambda a \ : \ a\in A \right\}, \quad \lambda \in \R,
$$
and
$$
\| A \| = \max \left\{ |a| \ : \ a\in A \right\}. 
$$
A linear Minkowski combination of two sets $A, B$ is
$$
\lambda A + \mu B = \{ \lambda a + \mu b: a\in A, b\in B \},\quad \lambda,\mu\in \R.
$$
The following  relations between operations on sets and the Hausdorff metric are easy to verify.
\begin{equation}\label{prop:haus}
	\haus(\{x\}+A,\{x\}+B)=\haus(A,B), \qquad \haus(\lambda A,\lambda B)=|\lambda|\haus(A,B),
\end{equation}
where $\lambda\in \R$ and $\{x\}$ is the set consisting of the single element $x$.
\medskip

\noindent We recall here the notion of a metric chain and of a metric linear combination~\cite{DFM:MetricIntegral}.

Given a finite sequence of sets $A_0, \ldots, A_m \in \Kn$, $m \ge 1$, 
the collection of all metric chains of ${A_0, \ldots, A_m}$ is 
\begin{equation}\label{def_MetrChain}
		\CH(A_0,\ldots,A_m) := \left\{ (a_0,\ldots,a_m): \; (a_i,a_{i+1}) \in \Pair {A_i}{A_{i+1}}, \  i=0,1,\ldots,m-1 \right\}.
\end{equation}
	For $m=0$ we define $\CH(A_0)=A_0$. Note that $\CH(A_0, A_1)=\Pair{A_0}{A_1}$.
\smallskip

	The metric linear combination of the sets $A_0, \ldots, A_m \in \Kn$, is 
\begin{equation}\label{def_MLC}
	\bigoplus_{i=0}^m \lambda_i A_i :=
	\left\{ \sum_{i=0}^m \lambda_i a_i \,: (a_0,\ldots,a_m) \in \CH(A_0,\ldots,A_m) \right\}.
\end{equation}

\begin{remark}\label{rem_2}
	${}$
	\begin{enumerate}
	\item [(1)] In the special case $m=1$ and $\lambda_0=1$, $\lambda_1=-1$ we obtain the {metric difference} of two 
	sets $A,B$ defined in \cite{DFM:Chains},
	$$ A\ominus B:= \{\ a-b:(a,b)\in \Pair{A}{B} \ \}.$$ 
	\item [(2)] In the special case $m=1$ and $\lambda_0, \lambda_1 \in [0,1]$, $\lambda_0 + \lambda_1 = 1$, 
		the metric linear combination is the binary operation between compact sets, introduced in~\cite{Artstein:MA}. 
	\item [(3)] The set $\displaystyle \bigoplus_{i=0}^m \lambda_i A_i$ is compact (see~\cite{DFM:HighOrder})).
	\end{enumerate}
\end{remark}

In this work we consider univariate set-valued functions with values in $\Kn$. 
The graph of a multifunction $\Map{[a,b]}$ is the set
$Graph (F)=\{(x,y) : x\in [a,b],\, y\in F(x)   \}$.
\medskip

\section{First metric divided differences and one-sided metric derivative}\label{Sect_MetricDD_MetricDeriv}

We start with defining divided differences.
\begin{definition}\label{def:MDD_Order1_anchored}
Let $F:(a,b)\to\Kn$. Fix $x_0,x\in (a,b)$,  $x_0\neq x$ and $y_0\in F(x_0)$. Define
the {\bf  
\textsl{first metric divided difference of $F$ at $x_0,x$, anchored at $y_0$}} , as the set
of vectors  
\[
[x_0,x]^M F|_{y_0}:= \  \left \{ \frac{y-y_0}{x-x_0} :\, (y_0,y)\in \Pair{F(x_0)}{F(x)} \right \}.
\]
\end{definition}

\begin{remark}\label{Remark_onDD}	${}$
	\begin{enumerate}
	\item [(i)] By the compactness of $F(x),F(x_0)$, and by Remark \ref{rem-1}, $[x_0,x]^M F|_{y_0}$ is non-empty and compact.
	\item [(ii)] If $(x_0, y_0)$ is in the interior of $Graph(F)$, then $y_0\in F(x)$ for $x$ in a small enough neighborhood of $x_0$,  thus $(y_0,y_0)\in \Pair{F(x_0)}{F(x)}$ is the only metric pair containing $y_0$, and therefore $[x_0,x]^M F|_{y_0}=\{0 \}$.
	\item [(iii)] By Definition~\ref{def:MDD_Order1_anchored}, $F(x)= \bigcup_{y_0\in F(x_0)} \{ \{y_0\}+(x-x_0)[x_0,x]^M F|_{y_0}  \}$.
\end{enumerate}
\end{remark}

\begin{example}\label{ex_ddif1}
	Let $F:(0,2)\to \Kone$ be defined by $F(x)=\{ x^\alpha,x^\beta \}$, where $\alpha, \beta \in \N$,
	$\alpha\neq \beta$, and let $x_0\in (0,2)$, $x=x_0+h$ with $|h|\neq 0$ sufficiently small. Then for $x_0, x_0+h \le 1$ or\ $x_0, x_0+h \ge 1$
	\[
	\Pair{F(x_0)}{F(x_0+h)} = \left \{ \big( x_0^\alpha , (x_0+h)^\alpha \big) , \big( x_0^\beta,(x_0+h)^\beta \big)  \right\}.
	\]
For $x_0=1$ we get $[1,1+h]^M F|_{1}=\left \{ \frac{(1+h)^\alpha-1}{h}, \frac{(1+h)^\beta-1}{h} \right \}$, while for $x_0\neq 1$ we have 
	$$
	[x_0,x_0+h]^M F|_{x_0^\alpha} = \left \{\frac{(x_0+h)^\alpha-x_0^\alpha}{h} \right\}\quad , \quad
	[x_0,x_0+h]^M F|_{x_0^\beta} = \left \{\frac{(x_0+h)^\beta-x_0^\beta}{h} \right\}.
	$$
\end{example} 

\begin{definition}\label{def:MDD_Order1}
	Let $F:(a,b)\to\Kn$. Fix $x_0,x\in (a,b)$,  $x_0\neq x$. Define
	the {\bf  \textsl{first metric divided difference of $F$ at $x_0,x$ }} as
	$$
	[x_0,x]^M F:= \bigcup\limits_{y\in F(x_0)} [x_0,x]^M F|_{y},
	$$
\end{definition}

\begin{remark}\label{rem:metr_diff}
	${}$
\begin{enumerate}
	\item [(i)] The set $[x_0,x]^M F$ is the metric divided difference of order $1$ in~\cite[Remark~5.2]{DFM:HighOrder}. It is compact as is shown in Lemma~5.5 there.
	\item [(ii)] Using the metric difference of two sets (see Remark~\ref{rem_2}), the first metric divided difference can be expressed, similarly to
	the divided difference of real-valued functions, as
	$$
		[x_0,x_1]^M F=
		\frac{1}{x_1-x_0}\left( F(x_1)\ominus F(x_0) \right).
		$$
	It is clear that one can interchange the points $x_0,x_1$ in this divided difference.
	\item [(iii)] 
	In view of \eqref{haus_MetrPair}, $\haus(A,B)=\|A\ominus B\|$ and by the second property in~\eqref{prop:haus},
a~multifunction $F$ is Lipschitz  with a constant $L$ in a domain if and only if 
$$\|[x_0,x_1]^M F\|=\frac{\haus(F(x_1),F(x_0))}{|x_1-x_0|}\le L$$ 
for any $x_0,x_1$ in this domain.
	\item [(iv)]
		For a single-valued function, Definition~\ref{def:MDD_Order1} coincides with the classical definition of divided difference of first order.
\end{enumerate}
\end{remark}

Now we introduce the right and the left metric derivatives. 
We use the notations $\lim_{x\to x_0^{+}}$ and $\lim_{x\to x_0^{-}}$ for the right and left limits, respectively.

\begin{definition} \label{def_metr_deriv}
Let $F:(a,b)\to \Kn,\ x_0\in (a,b)$. 
\begin{enumerate}
\item [(i)] We say that $F$ is {\bf \textsl {metrically differentiable from the right}}
{\bf\textsl{ at $x_0$}} if for any $y\in F(x_0)$ there is a~non-empty set $D^M_+ F(x_0)|_{y}$ of vectors  satisfying  
$$
	D^M_+ F(x_0)|_{y}=\lim_{x\to x_0^{+}}[x_0,x]^M F|_{y} 
$$
uniformly in $y\in F(x_0)$, namely, for any $\epsilon >0$  
there exists \ $\delta >0$ such that for any $x$ with $0<x-x_0<\delta$
$$
\sup_{y\in F(x_0)}\haus \left(D^M_+ F(x_0)|_{y},[x_0,x]^M F|_y\right) < \epsilon.
$$
We call the set $D^M_+ F(x_0)|_{y}$ {\bf \textsl{the right metric derivative of $F$} at $x_0$ 
\textsl{anchored at} $y\in F(x_0)$}.\\ The set
$$
{\mathfrak D}^M_+ F=  \left \{ D^M_+ F(x)|_{y}:\;\,  (x,y) \in Graph(F) \right \}
$$
determines a set-valued vector field of the right metric derivative of $F$ on the graph of $F$.\\
We call the set 
$$
D^M_+ F(x_0) = \bigcup_{y \in F(x_0)} D^M_+ F(x_0)|_{y}=\lim_{x\to x_0^{+}}\frac{1}{x-x_0}\left( F(x)\ominus F(x_0) \right)
$$
{\bf \textsl{the right metric derivative of $F$}} at $x_0$.

\item [(ii)] Similarly, we say that $F$ is {\bf \textsl {metrically differentiable from the left at $x_0$}} and 
define the sets $D^M_- F(x_0)|_{y}$\ , ${\mathfrak D}^M_- F$\ , $D^M_- F(x_0)$.

\item [(iii)] If $F$ is both metrically differentiable from the right and from the left at $x_0$, then we say that $F$ is {\bf \textsl {metrically differentiable }}at $x_0$.
\end{enumerate}
\end{definition}

Below are some observations concerning Definition~\ref{def_metr_deriv}.
\begin{remark}\label{rem_3}
${}$
\begin{itemize}
\item [(i)]  Note that in $D^M_{+} F(x_0)$, $D^M_{-} F(x_0)$ the information on the attachments to points in $Graph(F)$ is lost in contrary to ${\mathfrak D}^M_+ F$ and ${\mathfrak D}^M_- F$.
\item [(ii)] Although for single-valued functions the existence of derivative is defined when the right derivative equals the left one, for set-valued functions the equality $D^M_- F(x_0)=D^M_+ F(x_0)$ 
		is not satisfied in many cases, and we do not require it for metric differentiability. 
\item [(iii)] Note that the sets $D^M_+ F(x_0)|_{y} , D^M_- F(x_0)|_{y} ,  D^M_+ F(x_0) , D^M_- F(x_0)$ are compact as Hausdorff
	limits of compact sets (see Remark~\ref{Remark_onDD}~(i), Remark~\ref{rem_2}~(ii)) and are unique as limits in a complete metric space.
\item [(iv)]
	An equivalent definition of the right metric differentiability of $F$ at $x_0$ is the existence of
	a continuous function $\omega_{x_0}:[0,\infty) \to [0,\infty)$ with $\omega_{x_0}(0)=0$ 
	and such that for $x$ in a right neighborhood of $x_0$ 
	$$
	\sup_{y\in F(x_0)}\haus([x_0,x]^M F|_y,D^M_{+} F(x_0)|_y) \le \omega_{x_0}(|x-x_0|).
	$$
	Similarly, for the left metric differentiability of $F$ at $x_0,$ the following condition should hold
	$$
	\sup_{y\in F(x_0)}\haus([x_0,x]^M F|_y,D^M_{-} F(x_0)|_y) \le \omega_{x_0}(|x-x_0|).
	$$	
	where $x$ is within a left neighborhood of $x_0$.
\end{itemize}

\end{remark}
The next example demonstrates the disparity between the left and right derivative at a boundary point of the graph.

\begin{example}\label{Example_MetricDerivSVF}
	Consider $\MapR{[-1,1]}$, $F(x)=[0, 1+x]$, $x_0=0$.
	
	First, we evaluate the right metric derivative $D^M_{+} F(0)|_{y}$, for all $y\in F(0)$. 
	The set $[0,h]^M F$, ${0<h<1}$ is derived from the following collection of metric pairs 
	$$
	\Pair{F(0)}{F(h)}= 	\big \{(y,y) , \ (1, \tilde{y}):\  y\in[0,1),\, \tilde{y}\in[1, 1+h]\big \}.
	$$
	For $y\in [0,1)$ we have $[0,h]^M F|_{y} = \{0\}$ and $D^M_+ F(0)|_{y}=\lim\limits_{h\to 0}[0,h]^MF|_{y}=\{0\}$, 
	while for $y=1$ we obtain
	$$
	[0,h]^M F|_{1}=\left \{\frac{\tilde{y}-1}{h}:\,  \tilde{y} \in [1, 1+h] \right \}=[0,1]\quad , 
	\; D^M_+ F(0)|_{1}=\lim_{h\to 0}[0,h]^MF|_{1}=[0,1].
	$$
Now we calculate $\displaystyle D^M_{-} F(0)|_{y}$,  for all $y\in F(0)$. First note that for any $ y\in[0,1) $ and any $0<h<1-y$ we have 
	$ (y,y) \in  \Pair{F(-h)}{F(0)}$, then we obtain ${[0,-h]^M F|_{y} = \{0\}}$ and consequently $D^M_- F(0)|_{y}=\{0\}$.
	If $y=1$, then for any $0<h<1$ the metric pair $(1-h, 1) \in  \Pair{F(-h)}{F(0)} $ leads to 
	$$
	[0,-h]^M F|_{1}=\left \{\frac{1-h-1}{-h}=1\right \} \; {\mbox and } 
	\;\, D^M_- F(0)|_{1} = \{1\}.
	$$ 
	Thus $ D^M_{+} F(0)|_{y} =  D^M_{-} F(0)|_{y}=\{0\} $ for $ y\in[0,1) $,  while for the boundary point $(0,1)\in Graph(F)$ we have  ${D^M_{+} F(0)|_{1} \neq  D^M_{-} F(0)|_{1} }$.\\	
	\noindent Note that this multifuction is metrically differentiable at $x=0$ according to Definition~\ref{def_metr_deriv}.
\end{example}

In the following, we continue with Example~\ref{ex_ddif1} and show that $F$ from this example is metrically differentiable, with the left and right derivatives being equal.

\begin{example} \label{ex_ddif2} 
	Let $F:(0,2)\to \Kone$ be defined as in Example~\ref{ex_ddif1}, namely $F(x)=\{ x^\alpha,x^\beta \}$.
	Then, for $x_0=1$ we get
	${
		D^M_{+} F(1)|_{1} =\lim\limits_{h\to 0^+} [1,1+h]^M F|_{1} = 
		\{\alpha ,\beta\} ,
	}$ 
	while calculating for ${x_0\neq 1}$ we obtain
	$$
		D^M_{+} F(x_0)|_{x_0^\alpha}=\lim\limits_{h\to 0^+} [x_0,x_0+h]^M F|_{x_0^{\alpha}} =\{\alpha x_0^{\alpha-1}\}\quad \hbox{and}\quad D^M_{+} F(x_0)|_{x_0^\beta}=\{\beta x_0^{\beta-1}\}.
	$$  
	Thus, for each point $x\in(0,2)$, $x\neq 1$, $F(x)$ consists of two points, and 
	there is only one vector in $D^M_{+} F(x)|_{y}$ for $y \in F(x)$.
	In case $x=1$ there is only one point $y=1$ in $F(1)$, yet $D^M_{+} F(1)|_1=\{\alpha,\beta \}$.
	
	\noindent It is straightforward to verify that for any ${y\in F(x)}$ and ${x \in (0,2)}$, ${ D^M_{-} F(x)|_{y} =  D^M_{+} F(x)|_{y}}$.
\end{example} 

Now we extend the function of Example~\ref{ex_ddif1} to a function with values in~$\Ktwo$ and derive its metric left and right derivatives. 

\begin{example}\label{ex_ddif3}
	Let $F:(0,2)\to \Ktwo$ be defined by 
	$$
	F(x)=\left \{ \begin{pmatrix} x^\alpha\\ x^{\beta} \end{pmatrix} ,
	\begin{pmatrix} x^{\alpha+1}\\ x^{\beta+1} \end{pmatrix} \right \}\, \; \alpha,\beta \in \N,\, \alpha\neq \beta.
	$$
	First we evaluate the metric divided differences.
	For $x \in (0,2)$ and $|h|$ sufficiently small, the collection of all metric pairs $\Pair{F(x)}{F(x+h)}$ for $x, x+h \le 1$ or $x, x+h \ge 1$  is
		\begin{align*}
			\Pair{F(x)}{F(x+h)} = 
			\left \{ 
			\left (\begin{pmatrix} x^\alpha\\ x^{\beta^{\bf }} \end{pmatrix} , 
			\begin{pmatrix} (x+h)^\alpha\\ (x+h)^{\beta} \end{pmatrix} 
			\right )\ , \
			\left (\begin{pmatrix} x^{\alpha+1}\\ x^{\beta+1} \end{pmatrix} , 
			\begin{pmatrix} (x+h)^{\alpha+1}\\ (x+h)^{\beta+1} 	\end{pmatrix} 
			\right )
			\right \}.
		\end{align*} 
	Denote $p_1=\begin{pmatrix} x^\alpha\\ x^{\beta} \end{pmatrix} $,
	$p_2=\begin{pmatrix} x^{\alpha+1}\\ x^{\beta+1} \end{pmatrix} $.
	For $x \neq 1$ we have 
	\begin{align*}
			[x,x+h]^M F|_{p_1} = 
			\left \{ \begin{pmatrix} \frac{(x+h)^\alpha-x^\alpha}{h_{\bf }}\\ \frac{(x+h)^{\beta}-x^{\beta}}{h} \end{pmatrix} \right \}\quad ,\quad
			[x,x+h]^M F|_{p_2} = 
			\left \{ \begin{pmatrix} \frac{(x+h)^{\alpha+1^{\bf }}-x^{\alpha+1}}{h_{\bf }}\\ \frac{(x+h)^{\beta+1}-x^{\beta+1}}{h}_{} \end{pmatrix} \right \}.
	\end{align*}	
If $x=1$, the set $F(1)$ consists of a single element $p=\begin{pmatrix} 1\\1 \end{pmatrix} $, thus we obtain
$$
	[1,1+h]^M F|_{p} = 
\left \{ 
\begin{pmatrix} \frac{(1+h)^\alpha-1}{h_{\bf }}\\ \frac{(1+h)^{\beta}-1}{h} \end{pmatrix}\, , \,
\begin{pmatrix} \frac{(1+h)^{\alpha+1^{\bf }}-1}{h_{\bf }}\\ \frac{(1+h)^{\beta+1}-1}{h}_{} \end{pmatrix}.
\right \}
$$
Now we find $D^M_{+} F(x)|_{p}$ for every $p\in F(x)$ and $x\in (0,2)$. If $x \neq 1$ we get
$$
	D^M_{+} F(x)|_{p_1}= \lim_{h\to 0^+} [x,x+h]^M F|_{p_1} = 
		\left \{ \begin{pmatrix} \alpha x^{\alpha-1} \\ \beta x^{\beta-1} \end{pmatrix}	 \right \}\quad  {\mbox{and}} \quad 
	D^M_{+} F(x)|_{p_2}= 
		\left \{ \begin{pmatrix} {(\alpha+1)} x^{\alpha} \\ {(\beta+1)} x^{\beta} \end{pmatrix}	 \right \}.
$$
Calculations for $x = 1$ result in the set 
$
D^M_{+} F(x)|_{p} = 
\left \{ \begin{pmatrix} \alpha \\ \beta \end{pmatrix}\, , \, \begin{pmatrix} \alpha+1 \\ \beta+1 \end{pmatrix} 
\right \}
$

\noindent Just as in Example~\ref{ex_ddif2} here we also have $D^{M}_{-}  F(x)|_{p}= D^M_{+} F(x)|_{p}$\,  for any ${p\in F(x)}$ and ${x \in (0,2)}$.
\end{example} 
\smallskip

The following properties of the metric one-sided derivatives are derived from their definition. 
\begin{theo}\label{metderiv}
Let $F:(a,b)\to \Kn$ be metrically differentiable from the right (left) at $x_0\in (a,b)$. 
\begin{itemize}
\item[(i)] 
  $F$ is right (left) continuous at $x_0$.
\item[(ii)] 
  If $F={\rm const}$, then \, $D^M_{+} F(x)|_y=\{0\}$  $(\, D^M_{-} F(x)|_y=\{0\}\, )$  for all $y\in F(x)$, $x \in (a,b)$.
\item[(iii)]
  $D^M_{+} F(x_0)|_{y_0} \neq \{0\}$ $(\, D^M_{-} F(x_0)|_{y_0} \neq \{0\}\, )$ only for $(x_0,y_0)$ on the boundary of $Graph(F)$.
\end{itemize}
\end{theo}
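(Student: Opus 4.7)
My plan is to reduce all three statements to properties of the anchored divided differences by using the identity
\[
\haus(F(x),F(x_0)) = |x-x_0| \cdot \|[x_0,x]^M F\|
\]
noted in Remark~\ref{rem:metr_diff}(iii), and then passing to the limit via Definition~\ref{def_metr_deriv}. I treat the right-sided case; the left-sided one is identical by symmetry.

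For part~(i), I need to show that $\|[x_0,x]^M F\|$ stays bounded as $x\to x_0^+$, so that multiplying by $|x-x_0|$ in the displayed identity forces $\haus(F(x),F(x_0))\to 0$. Apply the uniform convergence in Definition~\ref{def_metr_deriv} with $\epsilon=1$ to obtain $\delta>0$ such that $\sup_{y\in F(x_0)}\haus([x_0,x]^M F|_y, D^M_+ F(x_0)|_y)<1$ for every $x$ with $0<x-x_0<\delta$. By property~(\ref{haus_pr_uni}) applied to the unions over $y\in F(x_0)$ this upgrades to $\haus([x_0,x]^M F, D^M_+ F(x_0))<1$, whence $\|[x_0,x]^M F\|\le \|D^M_+ F(x_0)\|+1$. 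The right-hand side is finite because $D^M_+ F(x_0)$ is compact (Remark~\ref{rem_3}(iii)). The only subtle step is this passage from the pointwise-in-$y$ supremum to a single norm bound on the union, which is precisely what~(\ref{haus_pr_uni}) delivers.

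Part~(ii) follows immediately from the structure of metric pairs: if $F\equiv C$ and $y\in C$, then $\dist(y,C)=0$ is attained uniquely at $y$, so the only pair in $\Pair{C}{C}$ with first coordinate $y$ is $(y,y)$. Hence $[x_0,x]^M F|_y=\{0\}$ for every $x\neq x_0$, and the one-sided limit is $\{0\}$.

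Part~(iii) is the contrapositive of Remark~\ref{Remark_onDD}(ii): if $(x_0,y_0)$ lies in the interior of $Graph(F)$, then $y_0\in F(x)$ for $x$ in a small neighborhood of $x_0$, forcing $[x_0,x]^M F|_{y_0}=\{0\}$ for such $x$. Passing to the limit yields $D^M_+ F(x_0)|_{y_0}=\{0\}$, so a nonzero contribution can only occur at boundary points of $Graph(F)$.
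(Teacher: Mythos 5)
Your proof is correct. Parts (ii) and (iii) coincide with the paper's argument: (ii) rests on the observation that for constant $F$ the only metric pair with first coordinate $y$ is $(y,y)$, and (iii) is exactly the contrapositive of Remark~\ref{Remark_onDD}(ii). Part (i) is a mild but genuine repackaging of the paper's proof. The paper stays at the level of the anchored objects: it multiplies the defining limit by $(x-x_0)$, uses compactness of $D^M_+F(x_0)$ to conclude $\sup_{y_0}\|(x-x_0)D^M_+F(x_0)|_{y_0}\|\to 0$, and hence $\sup\{|y-y_0| : (y_0,y)\in\Pair{F(x_0)}{F(x)}\}\to 0$, which gives right continuity via \eqref{haus_MetrPair}. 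You instead aggregate over the anchors with \eqref{haus_pr_uni} to get a bound on $\haus([x_0,x]^M F, D^M_+F(x_0))$, and then invoke the identity $\haus(F(x),F(x_0))=|x-x_0|\,\|[x_0,x]^M F\|$ from Remark~\ref{rem:metr_diff}(iii). Your route only needs \emph{boundedness} of $\|[x_0,x]^M F\|$ near $x_0$ rather than the full convergence statement, which is a small economy; the paper's version keeps the anchored formulation throughout, which is more in the spirit of how the derivative field ${\mathfrak D}^M_+F$ is used elsewhere. Both hinge on the same two ingredients: the uniformity in $y$ of the limit in Definition~\ref{def_metr_deriv} and the compactness (hence boundedness) of the one-sided derivative.
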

\begin{proof}
We prove the claims for the right derivative. The proofs for the left derivative are analogous.
\begin{itemize}
\item[(i)] 
By Definition  \ref{def_metr_deriv}~$(i)$, 
$$
\lim_{x\to x_0^{+}} \sup_{y_0\in F(x_0)} 
\haus\left(D^M_{+} F(x_0)|_{y_0}, \left \{\frac{y-y_0}{x-x_0}\ : \ (y_0,y)\in \Pair{F(x_0)}{F(x)} \right \}\right) =0.
$$
Multiplying the two sets above by $x-x_0$ and using the second relation in \eqref{prop:haus}, 
we get 
$$
\lim_{x\to x_0^{+}} \sup_{y_0\in F(x_0)} 
\frac{1}{|x-x_0|}\haus\left((x-x_0)D^M_{+} F(x_0)|_{y_0}, \{ {y-y_0}: \ (y_0,y)\in \Pair{F(x_0)}{F(x)} \}\right) =0,
$$
and it follows that
$$
\lim_{x\to x_0^{+}} \sup_{y_0\in F(x_0)} \haus\left((x-x_0)D^M_{+} F(x_0)|_{y_0}, \{ {y-y_0}: \ (y_0,y)\in \Pair{F(x_0)}{F(x)} \}\right) =0.
$$
Since $D^M_{+} F(x_0)$ is compact (see Remark~\ref{rem_3}~(iii)), we get 
$$
\displaystyle \lim_{x\to x_0^{+}} \sup_{y_0\in F(x_0)} \|(x-x_0)D^M_{+} F(x_0)|_{y_0} \|=0,
$$
which  leads to
$$
\lim_{x\to x_0^{+}} \sup \{ |y-y_0|\ : \ (y_0,y)\in \Pair{F(x_0)}{F(x)} \}=0.
$$
Thus by \eqref{haus_MetrPair}
$$
\lim_{x\to x_0^{+}}\haus(F(x), F(x_0))=0,
$$
which proves the continuity from the right of $F$ at $x_0$. 
\item[(ii)] 
This property follows from the definition of the metric divided difference and the observation 
that if $F$ is constant, then for any two points $x_0,x_1$, 
$$\Pair{F(x_0)}{F(x_1)}=\{(y,y): y\in F(x_0) \}$$
and therefore  $[x_0,x_1]^M F|_{y}=\{0\}$ for any $y\in F(x_0)$.
\item[(iii)]
Let $(x_0,y_0)$ be an interior point of $Graph(F)$. By Remark~\ref{Remark_onDD}~(ii), $[x_0,x]^M F|_{y_0}=\{0\}$ for $x$ in a small enough neighborhood of $x_0$, implying that  $D^M_{+} F(x_0)|_{y_0} = \{0\}$. 
\end{itemize}
\end{proof}


\section{The local metric linear approximant and $\alpha$-differentiability}

In this section we define and study the rate of approximation of the local
metric linear approximant of the multifunction $F$ at a point $x_0\in (a,b)$.
\begin{definition} \label{def_MLA}
Let $x_0 \in (a,b)$ and $F$ be metrically differentiable at $x_0$. 
\begin{enumerate}
	\item [(i)] The {\bf\textsl{right local metric linear approximant} of $F$ at $x_0$ anchored 
	at $y_0 \in F(x_0)$} is defined by
	$$
	L^M_+ F|_{y_0}(x)=\{y_0\}+(x-x_0)D^M_+ F(x_0)|_{y_0}, \qquad x\ge x_0,\ x\in \R.
	$$
	The {\bf\textsl{ right local metric linear approximant of $F$ at $x_0$}} is the set-valued function
	$$
	L^M_+ F(x)=\bigcup\limits_{y\in F(x_0)} L^M_+ F|_{y}(x).
	$$
	\item [(ii)]  Similarly, we define the {\bf\textsl{left local metric linear approximant anchored 
	at $y_0 \in F(x_0)$}, } $L^M_{-} F|_{y_0}(x)$, and the {\bf\textsl{left local metric linear approximant  }} $ L^M_- F(x)$.
	\item [(iii)] The {\bf\textsl{(two-sided) local metric linear approximant} of $F$ at $x_0$ anchored at ${y_0 \in F(x_0)}$} is
$$
L^M F|_{y_0}(x)= \begin{cases}
	L^M_+ F|_{y_0}(x), & x \ge x_0, \\
	L^M_- F|_{y_0}(x), & x < x_0.
\end{cases}
$$
The {\bf\textsl{(two-sided) local metric linear approximant }} of $F$ at $x_0$ is the set-valued function
$$
L^M F(x)= \begin{cases}
	L^M_+ F(x), & x \ge x_0, \\
	L^M_- F(x), & x < x_0.
\end{cases}
$$
\end{enumerate}
\end{definition}
\noindent Obviously $L^M_+ F(x_0)=L^M_- F(x_0)=L^M F(x_0)=F(x_0)$.
\medskip

The following observations follow from this definition.
\begin{theo}\label{prop:metrLA}
Let $F:\R\to \Kn$ be metrically differentiable at~$x_0$. Then for~$x\in (a,b)$ 
\begin{itemize}
\item[(i)] \ $\haus\left(L^M F|_{y_0}(x),\{y_0\}+(x-x_0)[x_0,x]^M F|_{y_0}\right) = o(|x-x_0|)$  uniformly in ${y_0\in F(x_0)}$.
\item[(ii)] \ $\haus(F(x),L^M F(x)) = o(|x-x_0|)$.
\end{itemize}
\end{theo}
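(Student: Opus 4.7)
The plan is to reduce both statements directly to the uniform convergence built into Definition~\ref{def_metr_deriv} together with the elementary scaling/translation properties \eqref{prop:haus} and the union inequality \eqref{haus_pr_uni}. There is really no hidden analytic ingredient: metric differentiability was defined precisely so that the divided difference sets converge, uniformly in the anchor $y_0$, to the one-sided derivative sets; (i) is that statement multiplied by $x-x_0$, and (ii) follows by passing the bound through a union.

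For (i), I would split on the sign of $x-x_0$ (say $x>x_0$; the other side is symmetric). By Definition~\ref{def_MLA}, $L^M F|_{y_0}(x)=\{y_0\}+(x-x_0)D^M_+F(x_0)|_{y_0}$, so the translation invariance in \eqref{prop:haus} cancels the $\{y_0\}$ on both sides, and the scaling identity pulls out $|x-x_0|$:
\begin{equation*}
\haus\bigl(L^M F|_{y_0}(x),\{y_0\}+(x-x_0)[x_0,x]^MF|_{y_0}\bigr)
= |x-x_0|\,\haus\bigl(D^M_+F(x_0)|_{y_0},[x_0,x]^MF|_{y_0}\bigr).
\end{equation*}
Taking the supremum over $y_0\in F(x_0)$ and invoking the uniform convergence clause of Definition~\ref{def_metr_deriv}(i) (equivalently the modulus $\omega_{x_0}$ in Remark~\ref{rem_3}(iv)) yields the bound $|x-x_0|\,\omega_{x_0}(|x-x_0|)=o(|x-x_0|)$ uniformly in $y_0$, as required.

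For (ii), I would use Remark~\ref{Remark_onDD}(iii) to write
\begin{equation*}
F(x)=\bigcup_{y_0\in F(x_0)}\bigl(\{y_0\}+(x-x_0)[x_0,x]^MF|_{y_0}\bigr),
\end{equation*}
and the definition of $L^MF$ as $\bigcup_{y_0\in F(x_0)} L^MF|_{y_0}(x)$. Applying the union property \eqref{haus_pr_uni} with index set $F(x_0)$ gives
\begin{equation*}
\haus\bigl(F(x),L^MF(x)\bigr)\le \sup_{y_0\in F(x_0)}\haus\bigl(\{y_0\}+(x-x_0)[x_0,x]^MF|_{y_0},\,L^MF|_{y_0}(x)\bigr),
\end{equation*}
and the right-hand side is $o(|x-x_0|)$ by the uniform version of part (i).

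The only mildly delicate point is bookkeeping the one-sided case distinction so that the derivative $D^M_+$ or $D^M_-$ used in $L^MF|_{y_0}(x)$ matches the side on which $x$ lies; but since both sides are assumed to enjoy uniform convergence, the estimate is symmetric and the combined two-sided estimate holds. I do not foresee a genuine obstacle — this proposition is essentially a clean rephrasing of the differentiability hypothesis through the two structural identities \eqref{prop:haus} and \eqref{haus_pr_uni}.
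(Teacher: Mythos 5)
Your argument is correct and follows exactly the route the paper takes: part (i) is the scaling/translation identity \eqref{prop:haus} combined with the uniform modulus of Remark~\ref{rem_3}(iv), and part (ii) is the union decomposition of Remark~\ref{Remark_onDD}(iii) passed through \eqref{haus_pr_uni}. The paper's own proof is just a citation of these same three ingredients, so your write-up is a faithful (and more explicit) version of it.
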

\begin{proof}${}$\\
 (i) follows directly from Remark~\ref{rem_3}~(iv) and~\eqref{prop:haus}. \\
 (ii) follows from (i), \eqref{haus_pr_uni} and Remark~\ref{Remark_onDD} (iii).
\end{proof}

Note that (ii) in the above proposition implies that $L^M_+ F(x)$ is a right directive 
in the sense of Artstein~\cite[Definition~5.5]{ART:95} or local approximation of first order in the sense of Nikolskiĭ~\cite[Definition~1]{NIK:93}.

To obtain error estimates of order $1+\alpha$ ($\alpha>0$) for the local linear approximant,
we introduce the notion of $\alpha$-differentiability of a set-valued function at a point.
\begin{definition}\label{def:unifMD}
	${}$
	\begin{enumerate}
	\item [{(i)}] Let a multifunction $F$ be metrically differentiable from the right at $x_0$.
		We call $F$ {\bf \textsl{right metrically $\alpha$-differentiable}} at $x_0$ with $\alpha>0$,
		if there is a constant $L>0$ and a right neighborhood $U$ of $x_0$ such that for all $x\in U$ 
		$$
		\sup_{y \in F(x_0)} \haus([x_0,x]^M F|_y,D^M_+ F(x_0)|_y) \le L |x-x_0|^\alpha.
		$$
		$F$ is called {\bf\textsl{strongly right metrically differentiable}} at $x_0$ if $\alpha=1$, namely
		$$
		\sup_{y \in F(x_0)} \haus([x_0,x]^M F|_y,D^M_+ F(x_0)|_y) \le L|x-x_0|.
		$$
	\item [{(ii)}] Similarly we define these notions from the left.
	\item [{(iii)}] If $F$ is right and left metrically $\alpha$-differentiable at $x_0$, $\alpha >0$,
	then we say that $F$ is {\bf \textsl{metrically $\alpha$-differentiable}} at $x_0$ with $\alpha >0$.
	We say that  $F$	is {\bf\textsl{strongly metrically differentiable}} at $x_0$ if it is strongly metrically differentiable from the right and from the left at $x_0$.
\end{enumerate}
\end{definition}

\begin{remark}
It is easy to verify that any real-valued function with H\"older continuous derivative with exponent $\alpha \in (0,1]$ 
in a neighborhood of $x_0$ is metrically $\alpha$-differentiable at~${x_0}$. 
\end{remark}

The next theorem shows that the local linear approximant of a metrically $\alpha$-differentiable multifunction
has order of approximation $1+\alpha$. 

\begin{theo}\label{th:order_LM}
If $F$ is metrically $\alpha$-differentiable at $x_0$, then for $x$ from a neighborhood of $x_0$
$$
\haus \left ( F(x),L^M F(x) \right ) \le L|x-x_0|^{1+\alpha}.
$$
\end{theo}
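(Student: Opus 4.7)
\medskip
\noindent\textbf{Proof proposal.} The plan is to mirror the argument of Theorem~\ref{prop:metrLA}(ii), but replacing the qualitative $o(|x-x_0|)$ control given by Remark~\ref{rem_3}(iv) by the quantitative bound $L|x-x_0|^{\alpha}$ afforded by Definition~\ref{def:unifMD}. I will treat the case $x\ge x_0$; the case $x<x_0$ is identical after substituting the left derivative.

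First, I would expand both $F(x)$ and $L^M F(x)$ as unions over the anchor point $y_0\in F(x_0)$. By Remark~\ref{Remark_onDD}(iii),
$$
F(x)=\bigcup_{y_0\in F(x_0)}\bigl(\{y_0\}+(x-x_0)[x_0,x]^M F|_{y_0}\bigr),
$$
and by Definition~\ref{def_MLA},
$$
L^M_+F(x)=\bigcup_{y_0\in F(x_0)}\bigl(\{y_0\}+(x-x_0)D^M_+F(x_0)|_{y_0}\bigr).
$$
Invoking~\eqref{haus_pr_uni} with the index set $F(x_0)$, the Hausdorff distance between these two unions is dominated by the supremum over $y_0\in F(x_0)$ of the Hausdorff distances between corresponding summands.

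Next, I would apply the translation invariance and positive homogeneity of the Hausdorff metric stated in~\eqref{prop:haus} to each such pair of summands, obtaining
$$
\haus\bigl(\{y_0\}+(x-x_0)[x_0,x]^M F|_{y_0},\{y_0\}+(x-x_0)D^M_+F(x_0)|_{y_0}\bigr)=|x-x_0|\,\haus\bigl([x_0,x]^M F|_{y_0},D^M_+F(x_0)|_{y_0}\bigr).
$$
Taking the supremum over $y_0\in F(x_0)$ and applying the right $\alpha$-differentiability estimate from Definition~\ref{def:unifMD}(i) yields, for all $x$ in an appropriate right neighborhood of $x_0$,
$$
\haus(F(x),L^M_+F(x))\le |x-x_0|\cdot L|x-x_0|^{\alpha}=L|x-x_0|^{1+\alpha}.
$$
Combining with the analogous left-sided bound on a left neighborhood of $x_0$ gives the claim on a full two-sided neighborhood.

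The argument is essentially a bookkeeping exercise and I do not anticipate a serious obstacle: the only non-trivial ingredient is the representation of $F(x)$ as a union of one-point-translated scalings of the divided-difference sets (Remark~\ref{Remark_onDD}(iii)), which already does all the geometric work. The $\alpha$-differentiability hypothesis was tailored precisely so that the passage from $o(|x-x_0|)$ in Theorem~\ref{prop:metrLA} to $O(|x-x_0|^{1+\alpha})$ here is automatic; the only point worth watching is that the estimate in Definition~\ref{def:unifMD} is \emph{uniform} in $y_0\in F(x_0)$, which is exactly what allows the supremum in~\eqref{haus_pr_uni} to be controlled.
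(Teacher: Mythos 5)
Your proposal is correct and follows essentially the same route as the paper's own proof: decompose $F(x)$ and $L^M_\pm F(x)$ into unions over anchors via Remark~\ref{Remark_onDD}(iii) and Definition~\ref{def_MLA}, bound the distance of the unions by the supremum of componentwise distances using~\eqref{haus_pr_uni}, then factor out $|x-x_0|$ via~\eqref{prop:haus} and apply Definition~\ref{def:unifMD}. No gaps.
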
 
\begin{proof}
We consider the case $x>x_0$. For $x<x_0$ the proof is similar.  

By Definition \ref{def_MLA}, by Remark~\ref{Remark_onDD} (iii) and by \eqref{haus_pr_uni} we get for any $y\in F(x_0)$
\begin{align*}
\haus(F(x),L^M F(x)) & = \haus(F(x),L^M_+ F(x)) \\& 
\le \sup_{y\in F(x_0)} \haus \left ( \{y\}+(x-x_0)[x_0,x]^M F|_y\ , L^M_+ F|_{y}(x) \right ).
\end{align*}
Then, using  properties \eqref{prop:haus} of the Hausdorff distance and Definitions~\ref{def_MLA}, \ref{def:unifMD}, 
we get
$$
\\haus(F(x),L^M_+ F(x))\le |x-x_0|\sup_{y\in F(x_0)} \haus([x_0,x]^M F|_y, D^M_+ F(x_0)|_y) 
\le L |x-x_0|^{1+\alpha},
$$
which proves the claim for $x>x_0$ . 
\end{proof}

A direct consequence of Theorem~\ref{th:order_LM} is
\begin{cor}\label{corol_1}
If $F$ is strongly metrically differentiable at $x_0$, then 
$$\haus(F(x),L^M F(x)) = O(|x-x_0|^2).$$
\end{cor}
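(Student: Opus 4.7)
The plan is essentially to invoke Theorem~\ref{th:order_LM} with the specific value $\alpha = 1$. By Definition~\ref{def:unifMD}(iii), strong metric differentiability of $F$ at $x_0$ is precisely the special case of metric $\alpha$-differentiability with $\alpha = 1$ (on both sides of $x_0$). That is, strong metric differentiability guarantees the existence of a constant $L > 0$ and a two-sided neighborhood $U$ of $x_0$ such that
$$
\sup_{y \in F(x_0)} \haus\!\left([x_0,x]^M F|_y,\, D^M_\pm F(x_0)|_y\right) \le L|x-x_0|
$$
for all $x \in U$ (with the appropriate sign convention on each side).

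Having identified that strong metric differentiability is metric $1$-differentiability, I would then directly apply Theorem~\ref{th:order_LM} with $\alpha = 1$ to conclude
$$
\haus\!\left(F(x), L^M F(x)\right) \le L|x-x_0|^{1+1} = L|x-x_0|^2,
$$
for all $x$ in a neighborhood of $x_0$. Since the bound is a constant multiple of $|x-x_0|^2$, this is precisely the statement $\haus(F(x), L^M F(x)) = O(|x-x_0|^2)$.

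There is no real obstacle here — the corollary is an immediate specialization of the preceding theorem. The only thing worth being careful about is confirming that Definition~\ref{def:unifMD} does indeed define "strongly metrically differentiable" as the $\alpha = 1$ case of "metrically $\alpha$-differentiable" on both sides, so that Theorem~\ref{th:order_LM} applies with the same constant $L$ on a common two-sided neighborhood (one may, if needed, take the maximum of the left and right constants and the intersection of the left and right neighborhoods to obtain a single $L$ and a single $U$).
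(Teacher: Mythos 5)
Your proposal is correct and matches the paper exactly: the paper states the corollary as a direct consequence of Theorem~\ref{th:order_LM}, which is precisely your specialization to $\alpha=1$ via Definition~\ref{def:unifMD}. Your remark about taking the maximum of the left and right constants and intersecting the one-sided neighborhoods is a sensible (if routine) point of care that the paper leaves implicit.
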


In the next example we present a strongly metrically differentiable multifunction at a point $x=0$ and construct its local metric linear approximant.
\begin{example}\label{example_StronglyMetrDif}
	Let $F:[-1,1] \to \Kone$  be defined as 
	$$
	F(x)= \begin{cases}
		[0, 2-x^2]\,  , & x \in [-1,0], \\
		[0,2+ x] \cup \{-x^2\}\,  , & x \in [0,1].
	\end{cases}
	$$
	First we show that $F$ is strongly right metrically differentiable at $x=0$. We start by calculating the metric divided differences $[0,h]^M F|_{y}$, $y\in F(0)=[0,2]$, $0<h\le 1$
	
	For $y\in (0,2)$, by Remark~\ref{Remark_onDD} (ii) and by Definition~\ref{def_metr_deriv} we have 
	$$
	[0,h]^M F|_{y} = \{0\}\quad \hbox{and} \quad  D^M_+ F(0)|_{y}=\{0\}.
	$$
	If $y=2$, similarly to the calculations in Example~\ref{Example_MetricDerivSVF} we obtain 
	$$
	[0,h]^M F|_{2} =[0,1] \quad \hbox{and} \quad D^M_+ F(0)|_{2}=[0,1].
	$$
	Thus for ${y\in (0,2]}$ we have ${\haus \left ( [0,h]^M F|_{y}, D^M_+ F(0)|_{y} \right )=0}$.
	
	When $y=0$, ${0<h\le 1}$, one can easily see that the only metric pairs in $\Pair{F(0)}{F(h)}$ containing $y=0$ are $(0,0)$ and $(0, -h^2)$, thus obtaining 
	\begin{equation}\label{formula_1}
		[0,h]^M F|_{0} = \left \{0, \frac{-h^2-0}{h} \right \} \quad \mbox{and} \quad D^M_+ F(0)|_{0} = \lim\limits_{h\to 0^+} [0,h]^M F|_{0} = \{0\}.
	\end{equation}
	This leads to 
	$\haus( [0,h]^M F|_{0}, D^M_+ F(0)|_{0})=h$.
	
	Note that the elements in $[0,h]^M F|_{0}$ are the values of the first divided differences of the real-valued functions (boundary functions) $s_1(x)=0$, $s_2(x)=-x^2$ based on $x_0=0$, $x_1=h$. 
	Accordingly, the single element of $D^M_+ F(0)|_{0}$ is the value of right derivatives of $s_1(x), s_2(x)$ at $x=0$. 
	
	Summing up the above, we conclude that 
	$$
	\sup_{y \in F(0)} \haus([0,h]^M F|_y,D^M_+ F(0)|_y) \le h,
	$$
	therefore $F$ is strongly right metrically differentiable at $x=0$.
	\medskip 
	
	Now we check from the left of $x=0$. For $ y\in[0,2) $, we again refer to the calculations in Example~\ref{Example_MetricDerivSVF} from  which one can easily get that 
	${[-h,0]^M F|_{y} = \{0\}}$ for any $0<h<\sqrt{2-y}$.
	Consequently $D^M_- F(0)|_{y}=\{0\}$ and $\haus([-h,0]^M F|_y,D^M_- F(0)|_y)=0$. 
	
	When $y=2$, computations similar to those used in~\eqref{formula_1}, 
	lead to ${[-h,0]^M F|_{2} = \{h\}}$ and ${D^M_- F(0)|_{2}=\{0\}}$. Thus we can derive
	$$
	\sup_{y \in F(0)} \haus([-h,0]^M F|_y,D^M_- F(0)|_y) \le h,
	$$
	and as a result, we conclude that $F$ is strongly left metrically differentiable at $x=0$. Thus $F$ is strongly metrically differentiable at $x=0$.
	
	Finally we construct the (two-sided) local metric linear approximant of $F$ at $x=0$ in accordance with Definition~\ref{def_MLA}.
	Consider first $x\ge 0$. Since $D^M_+ F(0)|_{y}=\{0\}$ for any $y\in [0,2)$, we get 
	$${L^M_+ F|_{y}(x) = \{y\}+(x-0)D^M_+ F(0)|_{y} = \{y\}}.$$
	As for $y=2$, we have $D^M_+ F(0)|_{2}=[0,1]$ and then ${L^M_+ F|_{2}(x) = \{2\} +x\cdot [0,1] = [2,2+x] }$. Thus
	$$
	L^M_+ F(x)=\bigcup\limits_{y\in [0,2]} L^M_+ F|_{y}(x)  = [0,2) \bigcup \ [2,2+x] = [0,2+x].
	$$
	If $x < 0$, then $L^M_- F(x)=[0,2]$, since $D^M_- F(0)|_{y}=\{0\}$, $\forall y \in [0,2]$. Summing up the results we obtain
	$$
	L^M F(x) = \begin{cases}
		[0, 2]\,  , & x < 0, \\
		[0,2+ x]\,  , & x \ge 0.
	\end{cases}
	$$
	It is easy to see that for $|h|\le 1$
	$$
	\haus\left (F(h),L^M F(h) \right) = h^2
	$$
	in accordance with Corollary~\ref{corol_1}.
\end{example}

{\bf Acknowledgment}

The authors thank Ron Goldman for initial discussions on possible definitions of derivatives of SVFs and for suggesting the term "anchored".



\end{document}